\theoremstyle{plain}
\newtheorem{thm}{Theorem}[section]
\newtheorem{prop}[thm]{Proposition}
\newtheorem{lem}[thm]{Lemma}
\newtheorem{cor}[thm]{Corollary}
\theoremstyle{definition}
\theoremstyle{remark}
\DeclareMathOperator{\tr}{tr}
\DeclareMathOperator{\End}{End}
\DeclareMathOperator{\grad}{grad}
\DeclareMathOperator{\scal}{scal}
\title{A note on Berezin-Toeplitz quantization of the Laplace operator}
\author{Alberto Della Vedova \footnote{
Universit\`a degli Studi di Milano-Bicocca. Dipartimento di Matematica e Applicazioni. Via Cozzi, 53 - 20125 Milano (IT). \textit{alberto.dellavedova@unimib.it}}
}
\begin{document}
\maketitle

\begin{abstract}
Given a Hodge manifold, it is introduced a self-adjoint operator on the space of endomorphisms of the global holomorphic sections of the polarization line bundle. Such operator is shown to approximate the Laplace operator on functions when composed with Berezin-Toeplitz quantization map and its adjoint up to an error which tends to zero when taking higher powers of the polarization line bundle.
\end{abstract}

\section{Introduction}


Let $M$ be a $n$-dimensional projective manifold and let $g$ be a Hodge metric on $M$.
This means that $M$ is equipped with a complex structure $J$ and with a positive Hermitian line bundle $(L,h)$.
Denoted by $\Theta$ the curvature of the Chern connection, the form $\omega = 2\pi i\Theta$ is positive, and it holds $g(u,v) = \omega(u,Jv)$.
Let $$\Delta: C^\infty(M) \to C^\infty(M)$$
be the positive Laplacian associated with the metric $g$ (recall that it is defined by $\Delta (f) \omega^n = - n \, i \partial \bar \partial f \wedge \omega^{n-1}$ for any complex-valued smooth function $f$ on $M$).
In this note it will be shown that $\Delta$ is approximated in a suitable sense by a sequence of self-adjoint positive operators 
$$\Delta_m: V_m \to V_m$$ 
acting on finite dimensional Hermitian vector spaces $V_m$ (see definitions at Sections \ref{sec::V_m} and \ref{sec::Delta_m}).
To be a little more precise, it will be proved that there exist maps 
$$ T_m: C^\infty(M) \to V_m, \qquad T^*_m : V_m \to C^\infty(M),$$ in fact adjoint to each other with respect to suitable Hermitian products, such that 
\begin{equation}\label{maineq}
T^*_m \circ \Delta_m \circ T_m(f) = m^{n-1} \Delta f + O(m^{n-2})
\end{equation}
as $m\to \infty$ for any given smooth function $f$.
For any $m>0$ the map $T_m$ is the well known Berezin-Toeplitz quantization map, and the operator $\Delta_m$ depends only on the projective geometry of the Kodaira embedding of $M$ via $L^m$.
Moreover $\Delta_m$ is related to the metric $g$ via the Fubini-Study metric induced by the $L^2$-inner product on the space of global holomorphic sections of $L^m$ (see Section \ref{sec::Delta_m}).
Thanks to results available on asymptotic expansions of Bergman kernel \cite{MaMar07} and Toeplitz operators \cite{MaMar10}, what one can prove is indeed the following result, which obviously implies \eqref{maineq}.
\begin{thm}\label{mainthm}
There is a complete asymptotic expansion
$$ T^*_m \circ \Delta_m \circ T_m(f) 
= \sum_{r \geq 0} P_r(f) m^{n-1-r} + O(m^{-\infty}), $$
where $P_r$ are self-adjoint differential operators on $C^\infty(M)$.
More precisely, for any $k,R \geq 0$ there exist constants $C_{k,R,f}$ such that
$$ \left\| T_m^* \circ \Delta_m \circ T_m (f) - \sum_{r=0}^R P_r(f) m^{n-1-r} \right\|_{C^k(M)} \leq C_{k,R,f} m^{n-R-2}. $$
Moreover one has
\begin{equation*}
P_0(f) = \Delta f, \qquad P_1(f) = -\frac{1}{2\pi}\Delta^2 f.
\end{equation*}
\end{thm}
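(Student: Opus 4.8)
\emph{Strategy.} The plan is to rewrite $T_m^*\circ\Delta_m\circ T_m$ as a composition of operators whose complete semiclassical expansions are already available, and then to multiply those expansions. The first step is to unwind the definitions of Sections \ref{sec::V_m} and \ref{sec::Delta_m}. By construction $\Delta_m$ is built from the Fubini--Study geometry of the Kodaira embedding $\iota_m\colon M\hookrightarrow\mathbb{P}(V_m^*)$ determined by the $L^2$-product on $H^0(M,L^m)$; hence, up to $O(m^{-\infty})$ in every $C^k$-norm, $\Delta_m\circ T_m(f)$ can be written as a Toeplitz-type re-quantization of $\Delta_m^{\mathrm{FS}}\!\big(\sigma_m(T_m f)\big)$, where $\sigma_m$ is the appropriate symbol map and $\Delta_m^{\mathrm{FS}}$ is the Laplacian of the pulled-back form $\iota_m^*\omega_{\mathrm{FS}}$. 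Composing with $T_m^*$ turns the outer re-quantization into a Berezin transform. In this way the left-hand side of the theorem becomes a composition of three factors, each carrying a complete expansion in powers of $m^{-1}$: the Bergman kernel / Berezin transform expansion of Ma--Marinescu \cite{MaMar07}, the expansion of $\iota_m^*\omega_{\mathrm{FS}}$ and hence of $\Delta_m^{\mathrm{FS}}$ (Tian, Lu, Zelditch), and the composition and symbol calculus for Toeplitz operators of Ma--Marinescu \cite{MaMar10}.

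\emph{The expansion and its formal properties.} Granting this reduction, the complete expansion $\sum_{r\ge0}P_r(f)\,m^{n-1-r}$ with the stated $C^k$-remainder follows by multiplying the three expansions term by term. The only delicate point is that this multiplication is legitimate: $O(m^{-\infty})$ errors composed with the (at most polynomially growing) operators involved must stay $O(m^{-\infty})$, and truncating at order $R$ must leave exactly the claimed $m^{n-R-2}$ remainder --- both facts are guaranteed by the precise $C^k$-estimates of \cite{MaMar07,MaMar10}. That each $P_r$ is a differential operator is inherited from the coefficient operators in the three input expansions, which are differential in $f$. Self-adjointness of $P_r$ is soft: for every $m$ the operator $T_m^*\circ\Delta_m\circ T_m$ is self-adjoint on $C^\infty(M)$ for the fixed $L^2$-type product (because $\Delta_m$ is self-adjoint and $T_m^*$ is the adjoint of $T_m$), so each coefficient of its asymptotic expansion is as well. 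Consistently with $\Delta(1)=\Delta^2(1)=0$, every $P_r$ annihilates constants, since $T_m(1)=\mathrm{Id}_{V_m}$ lies in the kernel of $\Delta_m$.

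\emph{Identifying $P_0$ and $P_1$: the main obstacle.} The substantive part is computing the first two coefficients, and this is where essentially all the work lies. I would compute locally in Bochner (K\"ahler normal) coordinates around an arbitrary point, where the diagonal Bergman kernel, the potential of $\iota_m^*\omega_{\mathrm{FS}}$, the symbol maps and the Toeplitz composition all admit standard local expansions; one reads off $P_0(f)$ and $P_1(f)$ at the point and then invokes the coordinate-free, universal character of these coefficients to conclude. At top order the three factors combine to give $P_0(f)=\Delta f$. For $P_1$ one must retain every contribution of relative order $m^{-1}$: the subleading Bergman coefficient $b_1$ (a multiple of the scalar curvature, which enters both the normalisation implicit in $T_m^*$ and the subleading term of the Berezin transform) and the subleading term $C_1$ of the Toeplitz composition $T_m(f)\,T_m(g)=T_m(fg)+m^{-1}T_m(C_1(f,g))+\cdots$; by Lu's refinement of Tian's theorem $\iota_m^*\omega_{\mathrm{FS}}=m\omega+O(m^{-1})$, so the Fubini--Study factor agrees with $m^{-1}\Delta$ to relative order $m^{-2}$ and affects only $P_2$ and beyond. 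The expectation --- and the crux of the proof --- is that all scalar-curvature and $\nabla\scal$-type contributions cancel, leaving precisely $-\tfrac1{2\pi}\Delta^2 f$; verifying this cancellation and pinning down the surviving constant is a delicate bookkeeping task, extremely sensitive to the $2\pi$-conventions fixed in the Introduction and in Sections \ref{sec::V_m}--\ref{sec::Delta_m}, and is the step I expect to be hardest.
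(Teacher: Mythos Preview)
Your strategy is essentially the paper's, but the paper sharpens the vague step you call ``Toeplitz-type re-quantization'' into an \emph{exact} identity, and this lets it avoid the Toeplitz product calculus entirely. The key lemma (Lemma~\ref{lem::Deltamfunction}) is
\[
\Delta_m(A)\;=\;T_m\!\left(\frac{\omega_m^n}{\rho_m\,\omega^n}\,\Delta_{g_m}\!\left(\frac{T_m^*(A)}{\rho_m}\right)\right),
\]
proved from Lemma~\ref{emgrad} (the identification $e_m(A)=\grad_m\big(T_m^*(A)/\rho_m\big)$) by a single integration by parts and duality. Plugging $A=T_m(f)$ gives
\[
T_m^*\circ\Delta_m\circ T_m(f)\;=\;T_m^*\circ T_m\!\left(\frac{\omega_m^n}{\rho_m\,\omega^n}\,\Delta_{g_m}\!\left(\frac{T_m^*\circ T_m(f)}{\rho_m}\right)\right),
\]
so the only asymptotic input needed is the Berezin-transform expansion $T_m^*\circ T_m(f)\sim\sum b_r(f)m^{n-r}$ (and its special case $\rho_m$), together with $\omega_m=m\omega+O(m^{-1})$.

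Consequently your anticipated difficulty for $P_1$ largely evaporates: the Toeplitz \emph{composition} coefficient $C_1(f,g)$ plays no role, and the only subleading ingredients are $b_1(f)=\tfrac{\scal(g)}{8\pi}f-\tfrac{1}{4\pi}\Delta f$ and $a_1=\tfrac{\scal(g)}{8\pi}$. The scalar-curvature terms cancel in the combination $\Delta(b_1(f))+b_1(\Delta f)-\Delta(a_1 f)-a_1\Delta f$, leaving $-\tfrac{1}{2\pi}\Delta^2 f$ by a short algebraic check rather than the ``delicate bookkeeping'' you expect.
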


The construction of the quantized Laplacian $\Delta_m$ was inspired by a work of J. Fine on the Hessian of the Mabuchi energy \cite{Fin10}.
Even though in principle $\Delta_m$ is unrelated to the problem of finding canonical metrics on $M$, when $\omega$ is balanced in the sense of Donaldson (see definition recalled at Section \ref{sec::bal}) the relation between $\Delta_m$ and $\Delta$ is even more evident as shown by the following
\begin{thm}\label{thm::bal}
If $\omega$ is $m$-balanced then
$$ \Delta_m (A) = C\, T_m \circ \Delta \circ T_m^*(A) $$
for al $A \in V_m$, where $C=\frac{m^{n-1} \left(\int_M \frac{\omega^n}{n!}\right)^2}{\left(\dim H^0(M,L^m)\right)^2}$.
\end{thm}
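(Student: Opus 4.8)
The approach is a direct computation. Unlike Theorem~\ref{mainthm}, no asymptotics are involved: the $m$-balanced hypothesis is an \emph{exact} pointwise identity on $M$, and the whole content of the statement is that this identity collapses the Fubini--Study data defining $\Delta_m$ onto the original K\"ahler data $(\omega,h)$, up to the explicit scalar $C$. First I would unwind the two operators on the right-hand side. Fix an $L^2$-orthonormal basis $s_1,\dots,s_N$ of $H^0(M,L^m)$, where $N=\dim H^0(M,L^m)$ and $L^2$-products are $\langle s,t\rangle_{L^2}=\int_M h^m(s,t)\,\tfrac{\omega^n}{n!}$ on sections and $\langle f,g\rangle_{L^2}=\int_M f\bar g\,\tfrac{\omega^n}{n!}$ on functions. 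From the defining adjunction $\langle T_m f, A\rangle_{V_m}=\langle f, T_m^*A\rangle_{L^2}$ together with the elementary identity $\langle T_m f, A\rangle_{V_m}=\tr\big(T_m(f)\,A^*\big)=\int_M f\,\overline{\sigma_A}\,\tfrac{\omega^n}{n!}$, one reads off that $T_m^*A$ is the covariant (Berezin) symbol of $A$,
$$\sigma_A(x)=\sum_{j,k}\langle A s_k,s_j\rangle\, h^m\big(s_j(x),s_k(x)\big).$$
On the other hand, writing $\phi_A:=\iota_m^*\hat A$ for the pullback along the Kodaira embedding $\iota_m\colon M\to\mathbb P\big(H^0(M,L^m)^*\big)$ of the moment-map function $\hat A([v])=\langle Av,v\rangle/|v|^2$ (up to an additive constant, which is killed by $\Delta$), a short coherent-state computation gives $\phi_A=\sigma_A/\rho_m$, where $\rho_m=\sum_j |s_j|^2_{h^m}$ is the Bergman density function and $\int_M\rho_m\,\tfrac{\omega^n}{n!}=N$.

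Now I would feed in the balanced hypothesis. By the definition recalled in Section~\ref{sec::bal}, $\omega$ being $m$-balanced means exactly that $\rho_m$ is constant, hence $\rho_m\equiv\dfrac{N}{\int_M\omega^n/n!}$ and, equivalently, $\iota_m^*\omega_{FS}=m\,\omega$, where $\iota_m^*\omega_{FS}$ (the pullback along $\iota_m$ of the Fubini--Study form, normalized so that $[\iota_m^*\omega_{FS}]=m[\omega]$) is the form on $M$ entering the definition of $\Delta_m$ in Section~\ref{sec::Delta_m}. Two consequences: first, $\phi_A=\dfrac{\int_M\omega^n/n!}{N}\,T_m^*A$; second, the Laplacian and volume form of the metric defining $\Delta_m$ satisfy $\Delta_{\iota_m^*\omega_{FS}}=\tfrac1m\Delta$ and $(\iota_m^*\omega_{FS})^n=m^n\omega^n$. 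Unwinding Section~\ref{sec::Delta_m}, $\Delta_m$ is characterized by the Dirichlet-type pairing $\langle\Delta_m A,B\rangle_{V_m}=\int_M\big(\Delta_{\iota_m^*\omega_{FS}}\phi_A\big)\,\overline{\phi_B}\,\tfrac{(\iota_m^*\omega_{FS})^n}{n!}$, that is, $\Delta_m$ transports the Fubini--Study Laplacian on $M$ through the symbol map $A\mapsto\phi_A$. Substituting the identities above, and their analogues for $B$, turns this into
$$m^{n-1}\left(\frac{\int_M\omega^n/n!}{N}\right)^{2}\big\langle\Delta\,T_m^*A,\ T_m^*B\big\rangle_{L^2}=C\,\big\langle T_m\big(\Delta\,T_m^*A\big),\ B\big\rangle_{V_m},$$
where in the last step I used the adjunction $\langle T_m g,B\rangle_{V_m}=\langle g,T_m^*B\rangle_{L^2}$ once more, with $g=\Delta\,T_m^*A$. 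Since $B\in V_m$ is arbitrary, this is precisely $\Delta_m(A)=C\,T_m\circ\Delta\circ T_m^*(A)$.

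The only genuine obstacle is the bookkeeping of normalizations; there is no analytic estimate here, because $m$-balancedness is an exact identity rather than an asymptotic one, so the whole identity is exact once conventions are aligned. One must check that: the inner product on $V_m$ used to \emph{define} $\Delta_m$ is the same one making $T_m^*$ adjoint to $T_m$; the identification $T_m^*A=\sigma_A$ holds on the nose, with no stray complex conjugation or swap $A\leftrightarrow A^*$ (in any case harmless, since $A\mapsto T_m^*A$ and $A\mapsto\phi_A$ are both complex-linear and intertwine $*$ with complex conjugation, and $\Delta$ commutes with complex conjugation); the $\Delta$ implicit in the definition of $\Delta_m$ is the same $\Delta=\bar\partial^*\bar\partial$ fixed by the convention $\Delta f\,\omega^n=-n\,i\partial\bar\partial f\wedge\omega^{n-1}$, so that the integration by parts producing the Dirichlet pairing carries no spurious factor of $2$; and $\iota_m^*\omega_{FS}$ is normalized so that $[\iota_m^*\omega_{FS}]=m[\omega]$, whence $m$-balancedness reads $\iota_m^*\omega_{FS}=m\omega$ rather than $=\omega$ --- this last point is exactly the source of the factor $m^{n-1}$ in $C$. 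If any normalization in Sections~\ref{sec::V_m}--\ref{sec::Delta_m} carries additional explicit powers of $m$, the same computation goes through verbatim and simply produces whatever constant those conventions dictate; with the natural ones it yields $C=\dfrac{m^{n-1}\big(\int_M\omega^n/n!\big)^2}{\big(\dim H^0(M,L^m)\big)^2}$ as claimed.
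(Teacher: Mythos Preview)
Your argument is correct and is essentially the same as the paper's: both compute $\langle \Delta_m A, B\rangle$ via the Dirichlet pairing $\int_M \Delta_{g_m}(T_m^*A/\rho_m)\,\overline{T_m^*B/\rho_m}\,\omega_m^n/n!$ (packaged in the paper as Lemma~\ref{lem::Deltamfunction}), then invoke the $m$-balanced identities $\rho_m\equiv N/\!\int_M\omega^n/n!$ and $\omega_m=m\omega$ to pull out the constant and use adjunction. The only cosmetic difference is that the paper first proves the general formula $\Delta_m(A)=T_m\!\big(\tfrac{\omega_m^n}{\rho_m\omega^n}\Delta_{g_m}(T_m^*A/\rho_m)\big)$ and then specializes, whereas you specialize inside the pairing before moving $T_m$ across.
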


\bigskip

Thanks to A. Ghigi and A. Loi for some useful discussion on Berezin-Toeplitz quantization. The main part of these note has been written in 2010 while the author was visiting Princeton University, whose hospitality is gratefully acknowledged. At that time the author was partially supported by a Marie Curie IOF (program CAMEGEST, proposal no. 255579). A recent pre-print of J. Keller, J. Meyer, and R. Seyyedali has a substantial overlapping with the present work \cite{KelMeySey2015}. The author became aware of that pre-print when it appeared on the arXiv.

\section{The space $V_m$}\label{sec::V_m}

The space $V_m$ is nothing but $\End(H_m)$, being $H_m=H^0(M,L^m)$ the space of the holomorphic sections of $L^m$. By Riemann-Roch theorem $\dim V_m$ grows like a positive multiple of $m^{2n}$ when $m \to \infty$.
The space $H_m$ is equipped with a Hermitian inner product $b_m$ induced by the Hermitian metric $h^m$ on $L^m$ and the K\"ahler form $\omega$. Explicitly it is given by
\begin{equation}\label{scalprod}
b_m (s,t) = \int_M h^m(s,t) \frac{\omega^n}{n!}
\end{equation}
for all $s,t \in H_m$. 
Thus $V_m$ is a Hermitian vector space with inner product defined by
\begin{equation}\label{HermprodV_m}
\langle A, B \rangle = \tr(AB^*),
\end{equation}
for all $A,B \in V_m$.
Here $B^*$ denotes the adjoint of $B$ with respect to $b_m$.

\section{The maps $T_m$ and $T^*_m$}

The map $T_m: C^\infty(M) \to V_m$ is the well known Berezin-Toeplitz quantization operator \cite{MaMar07}.
Given a smooth function $f$ on $M$, the operator $T_m(f)$ is the composition $T_m(f) = P \circ M(f)$, where $M(f): H_m \to \Gamma(M,A^m)$ is the multiplication by $f$: 
$$ M(f)(s) = fs,$$ and $P: \Gamma(M,A^m) \to H_m$ is the orthogonal projection with respect to the obvious extension of the inner product $b_m$ to smooth sections.

The space of smooth function $C^\infty(M)$ is equipped with the $L^2$-product induced by $\omega$, given by
\begin{equation}
\langle f , g \rangle = \int_M f \bar g \frac{\omega^n}{n!},
\end{equation}
for all $f , g \in C^\infty(M)$.
Let $T^*_m: V_m \to C^\infty(M)$ be the adjoint of $T_m$.

\begin{lem}\label{adjT}
Let $\{s_\alpha\}$ be an orthonormal basis of $H_m$. For all $A \in V_m$ it holds:
$$ T^*_m(A) = \sum_\alpha h^m(As_\alpha, s_\alpha).$$
\end{lem}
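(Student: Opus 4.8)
The goal is to compute $T_m^*$ directly from the defining relation: for all $f \in C^\infty(M)$ and all $A \in V_m$,
\[
\langle f, T_m^*(A) \rangle_{L^2} = \langle T_m(f), A \rangle_{V_m}.
\]

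So the plan is:

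1. **Expand the right-hand side using \eqref{HermprodV_m}.** We have $\langle T_m(f), A \rangle = \tr(T_m(f) A^*)$. Pick an orthonormal basis $\{s_\alpha\}$ of $(H_m, b_m)$ and write the trace as $\sum_\alpha b_m(T_m(f) A^* s_\alpha, s_\alpha)$. (One should double-check the convention: $\tr(B) = \sum_\alpha b_m(B s_\alpha, s_\alpha)$ for an o.n. basis; also whether $\langle A,B\rangle = \tr(AB^*)$ is conjugate-linear in the right slot consistently with the $L^2$ convention $\langle f,g\rangle = \int f\bar g$. A small bookkeeping point, but it determines where complex conjugates land.)

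2. **Unwind $T_m(f) = P \circ M(f)$.** Since $s_\alpha \in H_m$ and $P$ is the orthogonal projection onto $H_m$, and since the other factor $A^* s_\alpha$ also lies in $H_m$, we get $b_m(P(f \cdot A^* s_\alpha), s_\alpha) = b_m(f \cdot A^* s_\alpha, s_\alpha)$ because $s_\alpha \in H_m = \operatorname{im} P$ and $P$ is self-adjoint. Wait — careful: the trace is $\sum_\alpha b_m(T_m(f)(A^* s_\alpha), s_\alpha)$, so the argument of $M(f)$ is $A^* s_\alpha$, giving $f \cdot (A^* s_\alpha)$, then projected, then paired with $s_\alpha$; the projection drops out as just described. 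So the sum becomes $\sum_\alpha b_m(f \cdot A^* s_\alpha, s_\alpha) = \sum_\alpha \int_M f \, h^m(A^* s_\alpha, s_\alpha) \frac{\omega^n}{n!}$ using the explicit form \eqref{scalprod}. Actually one must be slightly careful about which slot of $b_m$ is conjugate-linear; assume $b_m(s,t) = \int h^m(s,t)\omega^n/n!$ is linear in $s$. Then $b_m(f \cdot A^* s_\alpha, s_\alpha) = \int f\, h^m(A^* s_\alpha, s_\alpha)\,\omega^n/n!$.

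3. **Move $A^*$ across the Hermitian form pointwise.** Here is the one genuinely substantive step: I want to replace $\sum_\alpha h^m(A^* s_\alpha, s_\alpha)$ by $\sum_\alpha h^m(s_\alpha, A s_\alpha) = \overline{\sum_\alpha h^m(A s_\alpha, s_\alpha)}$, or directly by $\sum_\alpha h^m(A s_\alpha, s_\alpha)$ depending on conventions. The subtlety is that $A^*$ is the adjoint of $A$ with respect to the \emph{global} inner product $b_m$, not with respect to the pointwise Hermitian metric $h^m$. So one cannot naively write $h^m(A^* s_\alpha, s_\alpha) = h^m(s_\alpha, A s_\alpha)$ at each point. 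The resolution: sum over $\alpha$ first. Writing $A^* s_\alpha = \sum_\beta b_m(A^* s_\alpha, s_\beta) s_\beta = \sum_\beta \overline{b_m(s_\alpha, A s_\beta)}\, s_\beta = \sum_\beta \overline{\langle\!\langle A\rangle\!\rangle_{\alpha\beta}}$-type coefficients, one finds that $\sum_\alpha h^m(A^* s_\alpha, s_\alpha) = \sum_{\alpha,\beta} \overline{b_m(s_\alpha, A s_\beta)} h^m(s_\beta, s_\alpha)$, and this is manifestly equal to $\sum_\alpha h^m(s_\alpha, A s_\alpha)$ only after recognizing the matrix of $A$ in the o.n. basis; in fact the cleanest route is: for \emph{any} $A$, $\sum_\alpha h^m(A s_\alpha, s_\alpha)$ is independent of the chosen orthonormal basis (it is a pointwise "trace" twisted by the pointwise metric), and a change-of-basis computation shows $T_m^*(A)$ as we are trying to establish transforms correctly. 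So the plan is to (a) establish $\langle f, T_m^*(A)\rangle_{L^2} = \int_M f\, \overline{\left(\sum_\alpha h^m(A s_\alpha, s_\alpha)\right)}\,\omega^n/n!$ after handling the conjugation, hence $T_m^*(A) = \sum_\alpha h^m(A s_\alpha, s_\alpha)$, and (b) check this expression is basis-independent so the formula is well posed.

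**Main obstacle.** The only real care needed is the conjugation/adjoint bookkeeping in Step 3: disentangling the pointwise Hermitian metric $h^m$ from the $L^2$-adjoint $A^*$, and tracking which arguments of $b_m$, $h^m$, $\langle\cdot,\cdot\rangle_{V_m}$, and $\langle\cdot,\cdot\rangle_{L^2}$ are conjugate-linear so that the final formula comes out with no stray conjugates (consistent with $T_m^*(A)$ being real when $A$ is self-adjoint, which it should be since $T_m$ sends real functions to self-adjoint operators). Everything else is a one-line computation: insert an orthonormal basis, use self-adjointness of $P$, and apply \eqref{scalprod}.
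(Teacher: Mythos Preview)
Your proposal is correct and follows the same overall strategy as the paper: verify the adjoint identity $\langle T_m f, A\rangle_{V_m} = \langle f, T_m^*A\rangle_{L^2}$ by writing the trace in an orthonormal basis, unwinding the definition of $T_m$, and using \eqref{scalprod}. The one difference is purely organizational. You expand $\tr(T_m(f)A^*)=\sum_\alpha b_m(T_m(f)A^* s_\alpha,s_\alpha)$, which forces you to carry $A^*$ acting on the basis and then, in your Step~3, perform the matrix/conjugation computation to convert $\sum_\alpha h^m(A^* s_\alpha,s_\alpha)$ into $\overline{\sum_\alpha h^m(A s_\alpha,s_\alpha)}$. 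The paper instead writes, via cyclicity of the trace, $\tr(A\,T_m(f)^*)=\sum_\alpha b_m(A s_\alpha, T_m(f)s_\alpha)$, substitutes the explicit expansion $T_m(f)s_\alpha=\sum_\beta\bigl(\int_M f\,h^m(s_\alpha,s_\beta)\,\omega^n/n!\bigr)s_\beta$, and reads off $\sum_\alpha\int_M \bar f\,h^m(A s_\alpha,s_\alpha)\,\omega^n/n!$ directly. This bypasses your ``main obstacle'' entirely: by letting the adjoint fall on $T_m(f)$ rather than on $A$, no pointwise-vs-global adjoint bookkeeping is needed. Your route works fine (the identity $\sum_\alpha h^m(A^* s_\alpha,s_\alpha)=\overline{\sum_\alpha h^m(A s_\alpha,s_\alpha)}$ is a one-line index swap once you write $A$ as a matrix in the orthonormal basis), but the paper's ordering is cleaner.
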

\begin{proof}
It is an easy consequence of general theory. 
For every $f \in C^\infty(M)$ one has
$$ \tr (A \,T_m(f)^*) = \sum_\alpha b_m\left( A s_\alpha , T_m(f) s_\alpha \right).$$ 
Substituting
$$ T_m(f) s_\alpha  = \sum_\beta \left( \int_M f h^m(s_\alpha,s_\beta) \frac{\omega^n}{n!} \right) s_\beta, $$
it follows
$$ \tr (A \, T_m(f)^*) = \sum_\alpha \int_M \bar f(x) h^m(As_\alpha,s_\alpha) \frac{\omega^n}{n!}, $$ 
which gives the thesis by arbitrariness of $f$ after noting that
$$ \int_M T^*_m(A) \bar f \frac{\omega^n}{n!} = \tr (A \, T_m(f)^*).$$ 
\end{proof}

Note that the map $T^*_m$ takes an endomorphisms $A \in V_m$ to the restriction to the diagonal of its integral kernel.
More precisely, given an orthonormal basis $\{s_\alpha\}$ of $H_m$, the integral kernel of $A$ is the smooth section $ K(A)$ of $L^m \boxtimes L^{-m}$ over $M \times M$ given by
$$ K(A)(x,y) = \sum_{\alpha,\beta} \int_M h^m(As_\alpha,s_\beta)(z) s_\beta(y) \otimes s_\alpha^*(x) \frac{\omega^n_z}{n!},$$
where $s_\alpha^*(x)$ is the metric dual of $s_\alpha(x)$ in the fiber of $L^m$ over the point $x$.
The restriction of the kernel to the diagonal is (naturally identified with) the smooth function $T^*_m(A)$ thanks to Lemma \ref{adjT}.
When $A$ is of the form $T_m(f)$ for some smooth function $f$, the integral kernel is given by
$$ K(T_m(f))(x,y)= \sum_{\alpha,\beta} \int_M f(z) h^m (s_\alpha,s_\beta)(z) s_\beta(y) \otimes  s_\alpha^*(x) \frac{\omega^n_z}{n!},$$
whence
$$ T^*_m \circ T_m(f)(x) = \sum_{\alpha,\beta} \int_M f(z) h^m (s_\alpha,s_\beta)(z) h^m(s_\beta, s_\alpha)(x) \frac{\omega^n_z}{n!}.$$
For a constant function $f=c \in \bold R$, one has $$ T^*_m \circ T_m (c) = c \, \rho_m,$$ where $\rho_m = \sum_\alpha |s_\alpha|_{h^m}^2$ is the so-called Bergman kernel of $\omega$. 

\section{The operator $\Delta_m$}\label{sec::Delta_m}

The operator $\Delta_m : V_m \to V_m$ is a self-adjoint operator which depends just on projective geometry of $M$ in $\bold P(H_m)$.
Consider the embedding 
$$\iota_m : M \to \bold P (H_m),$$ 
given by the Kodaira map of $M$ in $\bold P(H_ m^*)$ induced by $L^m$, followed by the isomorphism $\bold P(H_ m^*) \simeq \bold P(H_ m)$ induced by the Hermitian product $b_m$.
Every endomorphism $A$ of $H_m$ induces a (holomorphic) vector field $\nu(A)$ on $\bold P(H_m)$ whose flow is given by 
$$ \Phi_{\nu(A)}^t (z) = e^{tA}z. $$ 
Let $\Lambda_m$ be the hyperplane bundle on $\bold P(H_m)$, endowed with the Hermitian metric induced by $b_m$, and let $g_m$ be the pull-back to $M$ of the associated Fubini-Study metric on $\bold P(H_m)$.
One can restrict $\nu(A)$ to $M$ as a section of $\iota_m^*T\bold P(H_m)$, and then project orthogonally to $TM \subset \iota_m^*T\bold P(H_m)$ to get a smooth vector field $e_m(A)$ on $M$.
This defines a map
$$ e_m: V_m \to \Gamma(TM).$$
Recall that $V_m$ has an inner product defined by \eqref{HermprodV_m}. On the other hand, $\Gamma(TM)$ is equipped with the $L^2$-inner product induced by the K\"ahler metric $g_m$: 
$$ (\eta,\xi)_m = \int_M g_m(\eta,\xi) \frac{\omega_m^n}{n!}, $$
for all $\eta , \xi \in \Gamma(TM)$ (here $\omega_m$ is the K\"aler form of $g_m$, i.e. the pull-back of the Fubini-study form to $M$).
Thus one can form the adjoint operator 
$$ e_m^* : \Gamma(TM) \to V_m,$$ and finally define 
\begin{equation}
\Delta_m = e_m^* \circ e_m.
\end{equation}

Next lemma shows that the vector field $e_m(A)$ and the function $T^*_m(A)$ are related through the projectively induced K\"ahler metric $g_m$.

\begin{lem}\label{emgrad}
For all $A\in V_m$ one has 
$$ e_m(A) = \grad_m \frac{T^*_m(A)}{\rho_m}, $$
where the gradient is taken with respect to the Riemannian metric $g_m$.
\end{lem}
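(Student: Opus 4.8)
The plan is to compute both sides explicitly in terms of the Kodaira embedding $\iota_m$ and the Fubini--Study data on $\mathbf{P}(H_m)$, and to recognize the function $T^*_m(A)/\rho_m$ as (essentially) the Hamiltonian-type potential whose Fubini--Study gradient is the tangential projection of the holomorphic vector field $\nu(A)$. First I would record the local picture: fixing an orthonormal basis $\{s_\alpha\}$ of $H_m$, the embedding is $\iota_m(x) = [s_0(x):\dots:s_N(x)]$ in homogeneous coordinates, the Fubini--Study K\"ahler potential pulls back to $\frac{1}{2\pi}\log \rho_m$ (since $\rho_m = \sum_\alpha |s_\alpha|^2_{h^m}$ is exactly $\sum |Z_\alpha|^2$ in the affine chart, up to the metric normalization tying $\omega_m$ to $2\pi i \Theta_m$), and $\omega_m = i\partial\bar\partial \log \rho_m$ up to the same constant.

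Next I would treat the right-hand side. For a real-valued function $u$ on a K\"ahler manifold, $\grad_m u = J\,\mathrm{ham}_m(u)$ is determined by $du = g_m(\grad_m u, \cdot)$; equivalently, writing things in the ambient projective space, the key fact is that for $A \in V_m$ the vector field $\nu(A)$ on $\mathbf{P}(H_m)$ — whose flow is $z \mapsto e^{tA}z$ — has, relative to the Fubini--Study metric, a canonical decomposition into a Killing part and a gradient part, and the gradient part is $\grad_{FS}$ of the function $z \mapsto \frac{\langle Az, z\rangle}{\langle z, z\rangle}$ (a standard computation: this ratio is the moment-map component, and its $\bar\partial$ recovers the $(0,1)$-part of $\nu(A)$). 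Pulling this back along $\iota_m$ and using $h^m(As_\alpha, s_\beta)$ to identify $\langle Az,z\rangle / \langle z,z\rangle$ at the point $\iota_m(x)$ with $\big(\sum_{\alpha}h^m(As_\alpha,s_\alpha)(x)\big)\big/\big(\sum_\alpha |s_\alpha|^2_{h^m}(x)\big) = T^*_m(A)(x)/\rho_m(x)$ (invoking Lemma \ref{adjT}), the ambient gradient part of $\nu(A)$ restricted to $M$ becomes $\grad_m\big(T^*_m(A)/\rho_m\big)$ plus something tangent to the fibers — but since $M$ is a complex submanifold and the Killing/holomorphic-orthogonal decomposition is compatible with tangential projection for the real part, the projection $e_m(A)$ of $\mathrm{Re}\,\nu(A)$ onto $TM$ equals the projection of the gradient part, and the gradient of a function on $M$ is automatically tangent to $M$.

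The main obstacle I expect is bookkeeping around two subtleties: (i) the normalization constants relating $\omega$, $\omega_m$, $\Theta_m$, and the $2\pi$ factors, which must be tracked carefully so that the potential is exactly $\frac{1}{2\pi}\log\rho_m$ and not a rescaling — here the definition $\omega = 2\pi i \Theta$ from the introduction pins everything down; and (ii) justifying that orthogonal projection $TM \subset \iota_m^*T\mathbf{P}(H_m)$ commutes with the decomposition of $\nu(A)$ into its $g_m$-gradient part and its $g_m$-Killing part. For (ii) the clean argument is: $e_m(A) = \mathrm{pr}_{TM}(\nu(A) + \overline{\nu(A)})$ up to a factor (the real holomorphic vector field), and $\nu(A) + \overline{\nu(A)} = \grad_{FS}(\phi_A) + (\text{Hamiltonian vector field of }\phi_A)$ with $\phi_A(z) = \langle Az,z\rangle/\langle z,z\rangle$; projecting onto $TM$, the Hamiltonian (i.e.\ $J\grad$) part of $\phi_A$ restricted to $M$ is $J_M$ applied to $\grad_m(\phi_A\circ\iota_m)$ because $M$ is $J$-invariant and $\iota_m$ is isometric for these metrics, while $\grad_{FS}(\phi_A)$ restricted and projected gives $\grad_m(\phi_A\circ\iota_m)$; combining, the real gradient part survives and the Hamiltonian part is absorbed, yielding precisely $\grad_m(T^*_m(A)/\rho_m)$. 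Once the normalizations in (i) are fixed, the remaining computations are routine differentiations of $\log\sum|Z_\alpha|^2$.
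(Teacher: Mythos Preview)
Your proposal identifies the right ingredients and in fact lands close to the paper's argument: the key function is indeed $\mu_A(z)=\langle Az,z\rangle/\langle z,z\rangle$, its pullback along $\iota_m$ is $T^*_m(A)/\rho_m$ by Lemma~\ref{adjT}, and the orthogonal projection of an ambient gradient to a submanifold is the gradient of the restricted function. Those three facts are exactly what the paper uses.

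Where your write-up goes astray is the detour through a ``Killing part / gradient part'' decomposition of $\nu(A)+\overline{\nu(A)}$. You assert that after projecting to $TM$ one gets $\grad_m(\phi_A\!\circ\!\iota_m)$ plus $J_M\grad_m(\phi_A\!\circ\!\iota_m)$ and that ``the Hamiltonian part is absorbed''. That last step is not justified and, as written, is not correct: if $e_m(A)$ really were the projection of a vector field with a nonzero Hamiltonian component, that component would survive (it is tangent to $M$ since $M$ is a complex submanifold) and $e_m(A)$ would \emph{not} be a pure gradient. Nothing in your argument makes it disappear. The confusion stems from treating $\phi_A$ as if it were real and trying to split the real vector field, whereas for general $A\in V_m$ the function $\mu_A$ is complex-valued and the relevant identity is $\mathbf C$-linear.

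The paper avoids this entirely: it shows by a one-line contraction in homogeneous coordinates that $i_{\nu(A)}g_{FS}=d\mu_A$, i.e.\ $\nu(A)$ \emph{is} the (complex) Fubini--Study gradient of $\mu_A$, with no residual Hamiltonian piece to account for. Once you have that, equation~\eqref{gmemA} (which uses only that $e_m(A)$ is the orthogonal projection of $\nu(A)$ and that $g_m=\iota_m^*g_{FS}$) gives $g_m(e_m(A),v)=v(\iota_m^*\mu_A)$ for every $v\in\Gamma(TM)$, and you are done. I would drop the decomposition discussion and the worry about $2\pi$ normalizations (they play no role here) and instead carry out that direct contraction; it is both shorter and free of the gap.
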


\begin{proof}
We have to show that $g_m \left(e_m(A),v\right) = v \left( T^*_m(A)/\rho_m \right) $ for all vector field $v \in \Gamma(TM)$.
In order to do this, consider a smooth extension $\tilde v$ of $v$ to a smooth vector field of $\bold P(H_m)$.
Since $g_m$ is induced by the Fubini-Study metric $g_{FS}$ on $\bold P(H_m)$, and $e_m(A)$ is the orthogonal projection of $\nu(A)$ on $TM$, one has 
\begin{equation}\label{gmemA}
g_m(e_m(A),v) = \iota_m^* \, g_{FS} (\nu(A), \tilde v).
\end{equation}   

The right hand side of the equation above can be related to a function on $\bold P(H_m)$ naturally associated to $A$.
Indeed we claim that $\nu(A)$ is the gradient of the function $\mu_A$ defined by
$$ \mu_A(s) = \frac{b_m\left(As, s\right)}{b_m(s,s)}.$$ This is quite standard, but a proof of that fact is included at the end of the proof for convenience of the reader. 
Now we go ahead taking the claim for grant.
From \eqref{gmemA} one gets
$$ g_m (e_m(A),v) = v (\iota_m^* \mu_A), $$
thus it remains to prove the identity
\begin{equation}\label{muA=K/berg}
\iota_m^* \mu_A = T^*_m(A)/\rho_m.
\end{equation}
To this end, let $\{s_\alpha\}$ be an orthonormal basis of $H_m$, so that the pull-back of $\mu_A$ to $M$ is given by 
$$ \iota_m^* \mu_A(x) = \frac{\sum_{\alpha, \beta} s_\alpha(x) \overline{s_\beta(x)} b_m(As_\alpha,s_\beta) }{\sum_\gamma |s_\gamma(x)|^2},$$ 
where the ratio $\frac{s_\alpha(x) \overline{s_\beta(x)}}{\sum_\gamma |s_\gamma(x)|^2}$ is well defined and can be computed choosing an arbitrary Hermitian metric on the line bundle $L^m$.
In particular, taking $h^m$ it becomes $ \frac{h^m(s_\alpha,s_\beta)(x)}{\sum_\gamma |s_\gamma|^2_{h^m}(x)}$, whence 
$$ \iota_m^* \mu_A(x) = \frac{\sum_{\alpha, \beta} \int_M h^m(As_\alpha,s_\beta)(z) h^m(s_\alpha, s_\beta)(x) \frac{\omega_z^n}{n!}}{\sum_\gamma |s_\gamma|^2_{h^m}(x)},$$ 
and the identity \eqref{muA=K/berg} follows by definition of $\rho_m$ and Lemma \ref{adjT}.

Finally, in order to prove the claim above, let $(z_\alpha)$ be homogeneous coordinates on $\bold P(H_m)$ corresponding to the basis $\{s_\alpha\}$. 
The function $\mu_A$ then takes the form 
$$ \mu_A(z) = \frac{\bar z A z^t}{|z|^2},$$ 
where now $A=\left(A_{\alpha \beta}\right)$ denotes the matrix that represents the endomorphism $A$ with respect the chosen basis. 
The equality between $\nu(A)$ and the gradient of $\mu_A$ can be proved in local affine coordinates, but here we consider the projection of $H_m\setminus \{0\}$ on $\bold P(H_m)$, and the fact that $\nu(A)$, $g_{FS}$ and $\mu^A$ lift to $\bold C^*$-invariant objects (which will be denotes with the same symbols). 
In particular one has 
$$ \nu(A) = \sum_{\alpha,\beta}A_{\alpha\beta} \left(z_a \frac{\partial}{\partial z_\beta} + \bar z_\beta \frac{\partial}{\partial \bar z_\alpha}\right),$$
and 
$$g_{FS}=\sum_i \frac{dz_id\bar z_i}{|z|^2} - \sum_{i,j} \frac{\bar z_i z_j dz_i d \bar z_j}{|z|^4},$$
whence
$$ i_{\nu(A)} g_{FS} = \sum_{\alpha,\beta}A_{\alpha_\beta} \left( \frac{z_\alpha d \bar z_\beta + \bar z_\beta d z_\alpha}{|z|^2} - \frac{z_\alpha\bar z_\beta d |z|^2}{|z|^4}\right) = d \mu_ A, $$
which proves the claim.
\end{proof}

Next lemma characterizes the kernel of $\Delta_m$.

\begin{lem}
$\Delta_m (A)=0$ if and only if $A$ is a multiple of the identity.
\end{lem}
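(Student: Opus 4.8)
The plan is to make two easy reductions and then isolate the real content of the statement as a nondegeneracy property of the Kodaira embedding. First, since $\Delta_m = e_m^* \circ e_m$, for every $A \in V_m$ one has $\langle \Delta_m(A), A\rangle = (e_m(A),e_m(A))_m$, so $\Delta_m(A) = 0$ if and only if the vector field $e_m(A)$ vanishes identically on $M$. By Lemma \ref{emgrad} this vector field equals $\grad_m\bigl(T_m^*(A)/\rho_m\bigr)$, and since $M$ is connected it vanishes precisely when $T_m^*(A)/\rho_m$ is a constant $c \in \bold C$; by \eqref{muA=K/berg} this means $\iota_m^*\mu_A \equiv c$, equivalently $\iota_m^*\mu_{A-cI} \equiv 0$, because $\mu_B$ is linear in $B$ and $\mu_I \equiv 1$. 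If $A = cI$ this is automatic, which settles one implication (and in particular $\Delta_m(cI) = 0$).

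For the converse put $B = A - cI$; it remains to show that $\iota_m^*\mu_B \equiv 0$ forces $B = 0$. In homogeneous coordinates $(z_\alpha)$ attached to an orthonormal basis one has $\mu_B(z) = \bar z\, B\, z^t/|z|^2$, so the hypothesis says $\bar z\, B\, z^t = 0$ for every $z$ in the affine cone over $\iota_m(M)$. I would fix a point of $M$ together with a local holomorphic frame of $L^m$ near it, obtaining a holomorphic lift $\zeta \colon U \to H_m \setminus \{0\}$ of $\iota_m$ on a connected open set $U \subseteq \bold C^n$ with $\overline{\zeta(u)}\, B\, \zeta(u)^t = 0$ for all $u \in U$. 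The key step is then to observe that $(u,v) \mapsto \overline{\zeta(v)}\, B\, \zeta(u)^t$ is holomorphic in $u$ and antiholomorphic in $v$ and vanishes on the diagonal $v = u$; expanding it in a power series in $u$ and $\bar v$ and restricting to $\bar v = \bar u$ gives a power series in $u$ and $\bar u$ that vanishes identically, which forces all coefficients to be zero, so the function vanishes on all of $U \times U$. By sesquilinearity this gives $\bar w\, B\, z^t = 0$ for all $z, w$ in the linear span $W$ of $\zeta(U)$.

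Finally one checks that $W = H_m$, which I expect to be the only point requiring care. The embedding $\iota_m$ is nondegenerate: if $\iota_m(M)$ were contained in a hyperplane of $\bold P(H_m)$, the corresponding nonzero $v \in H_m$ would vanish at every point of $M$ and hence be the zero section, a contradiction. Since $\zeta(U) \subseteq W$, the set $\iota_m^{-1}(\bold P(W))$ is a closed analytic subset of the connected manifold $M$ containing a nonempty open set, so it equals $M$; thus $\iota_m(M) \subseteq \bold P(W)$ and nondegeneracy yields $W = H_m$. Therefore $\bar w\, B\, z^t = 0$ for all $z, w \in H_m$, which means $B = 0$, i.e. $A = cI$, and the proof is complete.
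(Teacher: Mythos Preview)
Your argument is correct. The initial reduction---$\Delta_m(A)=0 \iff e_m(A)=0 \iff T_m^*(A)/\rho_m$ is constant $\iff A-cI\in\ker T_m^*$---is exactly what the paper does. Where you diverge is in the final step: the paper simply invokes the injectivity of $T_m^*$ by citing \cite[Proposition 3.4]{Sch12}, and offers as an alternative a one-line geometric sketch (the locus where $d\mu_A$ vanishes is totally real unless $\mu_A$ is constant, hence cannot contain the holomorphic image $\iota_m(M)$). You instead give a self-contained proof of this injectivity via polarization: the function $(u,v)\mapsto \overline{\zeta(v)}\,B\,\zeta(u)^t$ is separately holomorphic/antiholomorphic and vanishes on the totally real diagonal, hence vanishes identically; combined with linear nondegeneracy of the Kodaira embedding this forces $B=0$. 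This is a genuinely different route---more hands-on than quoting a reference and more detailed than the paper's geometric sketch---and it has the advantage of making the argument independent of outside sources. The paper's second approach, on the other hand, is conceptually cleaner once one accepts the totally-real claim, since it avoids the analytic-continuation step needed to pass from a local chart $U$ to all of $M$.
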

\begin{proof}
By definition $\Delta_m = e_m^* \circ e_m$, and by Lemma \ref{emgrad} and its proof it follows $e_m(A) = \grad_m \frac{T^*_m(A)}{\rho_m}$.
Thus $\Delta_m(A)=0$ if and only if $\frac{T^*_m(A)}{\rho_m} = c$ for some $c \in \mathbf C$. Let $I \in V_m$ be the identity. The identity $T_m^*(I) = \rho_m$ implies $\Delta_m(A)=0$ if and only if $A-cI \in \ker T_m^*$, thus the thesis follows by injectivity of $T_m^*$ \cite[Proposition 3.4]{Sch12}.

Alternatively, one can argue more geometrically as follows. In the proof of Lemma \ref{emgrad} has been introduced a smooth function $\mu_A$ on $\mathbf P(H_m)$  satisfying $\frac{T^*_m(A)}{\rho_m} = \iota_m^* \mu_A$.
Thus by Lemma \ref{emgrad} one has $\Delta_m(A)=0$ if and only if $\iota_m^* d \mu_A=0$.
Then the thesis follows by showing that the locus where $d \mu_A=0$ contains no positive dimensional holomorphic submanifolds (or, in other words, $\ker d\mu_A$ is totally real), unless $\mu_A$ is constant.
\end{proof}

Now we pass to give a more explicit description of the operator $\Delta_m$.
To this end fix an orthonormal basis $\{s_\alpha\}$ of $H_m$ and let $(z_i)$ be the corresponding homogeneous coordinates on $\mathbf P(H_m)$. Moreover this identifies $V_m$ with the space of $\dim H_m \times \dim H_m$ complex matrices.
Consider the map 
$$ \Psi_m : \bold P(H_m) \to V_m$$
defined by $\Psi_m(z) = \frac{z\bar z^t}{|z|^2}$.
Note that by Lemma \ref{adjT} follows that
\begin{equation}\label{pbtrAPsi}
\iota_m^* \tr(A \Psi_m) = \frac{T_m^*(A)}{\rho_m}
\end{equation}
for all $A \in V_m$.
On the other hand, by definition of $\Delta_m$ one has
$$ \tr(\Delta_m(A)B^*) = \int_M g_m (e_m(A),e_m(B)) \frac{\omega_m^n}{n!},$$ 
thus by Lemma \ref{emgrad} together with \eqref{pbtrAPsi} one gets
$$ \int_M g_m (e_m(A),e_m(B)) \frac{\omega_m^n}{n!} = \int_M i\partial \tr(\Psi_mA) \wedge \bar \partial \tr(\Psi_mB^*) \wedge \frac{\omega_m^{n-1}}{(n-1)!} $$ 
whence 
$$ \tr(\Delta_m(A)B^*) = \int_M i \partial \tr(\Psi_mA) \wedge \bar \partial \tr(\Psi_mB^*) \wedge \frac{\omega_m^{n-1}}{(n-1)!}. $$ 
Let 
$$ \Phi_m : \bold P(H_m) \to V_m^*$$
be the map obtained by composing $\Phi$ with the dual paring induced by the Hermitian metric $b_m$ on $V_m$. More explicitiely one has
$$ \Phi_m (z) (A) = \tr(\Psi_m(z)A) $$ 
for all $A \in V_m$ and $z \in \mathbf P(H_m)$.
By computation above we proved the following

\begin{prop}\label{prop::Deltaintform}
Consider the $\End(V_m)$-valued differential form on $\bold P(H_m)$ defined by 
$$ \Xi_m = i \partial \Phi_m \wedge \bar \partial \Psi_m \wedge e^{\omega_{FS}}.$$
Then it holds $$ \Delta_m = \int_M \Xi_m.$$ 
\end{prop}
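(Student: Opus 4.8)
The plan is to observe that the proposition is essentially a repackaging of the scalar identity
\[ \tr(\Delta_m(A)B^*) = \int_M \iota_m^*\!\left( i\partial\tr(\Psi_m A)\wedge\bar\partial\tr(\Psi_m B^*)\wedge\tfrac{\omega_m^{n-1}}{(n-1)!}\right), \qquad A,B\in V_m, \]
which is exactly what the displayed computation immediately preceding the statement produces: one starts from $\Delta_m=e_m^*\circ e_m$, so that $\tr(\Delta_m(A)B^*)=\int_M g_m(e_m(A),e_m(B))\,\omega_m^n/n!$, uses Lemma~\ref{emgrad} together with \eqref{pbtrAPsi} to write $e_m(A)=\grad_m\iota_m^*\tr(A\Psi_m)$, and then invokes the usual Kähler identity turning an $L^2$ inner product of gradients into a $\partial\wedge\bar\partial$ wedge product. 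Taking this identity as granted, everything left to do is the linear-algebraic book-keeping needed to recognize the right-hand side as $\tr\big((\int_M\Xi_m)(A)\,B^*\big)$.

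For that, first I would sort out the tensor slots. Since $\Phi_m$ takes values in $V_m^*$ and $\Psi_m$ in $V_m$, the form $i\partial\Phi_m\wedge\bar\partial\Psi_m$ is naturally $\End(V_m)=V_m^*\otimes V_m$-valued; wedging with $e^{\omega_{FS}}$ and extracting the top-degree ($2n$-form) component replaces $e^{\omega_{FS}}$ by $\omega_{FS}^{n-1}/(n-1)!$, and integrating the $\iota_m$-pullback over $M$ gives an element $\int_M\Xi_m\in\End(V_m)$. Evaluating this endomorphism on $A$ contracts the $V_m^*$-slot carried by $\partial\Phi_m$ against $A$, and since $\Phi_m(z)(A)=\tr(\Psi_m(z)A)$ this turns $\partial\Phi_m$ into $\partial\tr(\Psi_m A)$; thus $(\int_M\Xi_m)(A)=\int_M\iota_m^*\big(i\partial\tr(\Psi_m A)\wedge\bar\partial\Psi_m\wedge\tfrac{\omega_{FS}^{n-1}}{(n-1)!}\big)$, still $V_m$-valued through the $\bar\partial\Psi_m$ factor. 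Pairing with $B$ via the linear functional $X\mapsto\tr(XB^*)$, using that $B^*$ is constant and the trace linear so that $\tr\big((\bar\partial\Psi_m)B^*\big)=\bar\partial\tr(\Psi_m B^*)$, and using $\iota_m^*\omega_{FS}=\omega_m$, reproduces precisely the right-hand side of the identity above. Hence $\tr\big((\int_M\Xi_m)(A)\,B^*\big)=\tr(\Delta_m(A)\,B^*)$ for all $A,B\in V_m$, and since $\langle X,Y\rangle=\tr(XY^*)$ is a non-degenerate Hermitian form on $V_m$, fixing $A$ and letting $B$ run over a basis forces $(\int_M\Xi_m)(A)=\Delta_m(A)$; as $A$ is arbitrary, $\Delta_m=\int_M\Xi_m$.

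I do not expect a genuine obstacle here: the only substantive (analytic) step — the Stokes/Kähler identity relating the $g_m$-gradient inner product to the $i\partial(\cdot)\wedge\bar\partial(\cdot)$ form — already sits in the computation before the statement, and the remainder is definitional. The points that do need care are purely the linear-algebraic accounting of which factor of $\End(V_m)=V_m^*\otimes V_m$ is the ``input'' and which the ``output'', and checking that the power of $i$, the conjugations hidden in $B\mapsto B^*$ (which is why the form features $\tr(\Psi_m B^*)$ rather than $\tr(\Psi_m B)$, $\Psi_m$ being self-adjoint), and the factorials in $e^{\omega_{FS}}$ all line up. I would therefore present the proof as a short verification that the notation $\Xi_m=i\partial\Phi_m\wedge\bar\partial\Psi_m\wedge e^{\omega_{FS}}$ merely encodes the preceding computation.
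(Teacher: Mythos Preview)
Your proposal is correct and follows exactly the paper's approach: the paper presents the computation you quote immediately before the proposition and then simply says ``By computation above we proved the following,'' so the proposition is indeed just a repackaging of the scalar identity, with the linear-algebraic bookkeeping (which you spell out carefully) left implicit. If anything, your write-up is more explicit than the paper's about why the $\End(V_m)$-valued form, when evaluated on $A$ and paired with $B^*$, reproduces the displayed integral.
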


Here $e^{\omega_{FS}}$ is a mixed-degree form defined by the exponential series. Since $\omega_{FS}^k=0$ for all $k \geq \dim H_m$, one has
$$e^{\omega_{FS}} = 1 + \omega_{FS} + \frac{\omega_{FS}^2}{2} + \dots + \frac{\omega_{FS}^{\dim H_m - 1}}{(\dim H_m -1)!}.$$ This implies that $\Xi_m$ has mixed degree. More interestingly it depends just on the dimension of $\mathbf P(H_m)$ (and on choice of homogeneous coordinates) and it is independent of $M$.

\begin{cor}
$$ \tr (\Delta_m) = 2 \pi n \, m^n \int_M \frac{\omega^n}{n!}.$$ \end{cor}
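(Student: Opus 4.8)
The plan is to take the trace of the identity in Proposition \ref{prop::Deltaintform}. Since the trace of an $\End(V_m)$-valued differential form is computed by tracing coefficients, and $\iota_m^*\omega_{FS}=\omega_m$, this gives
$$ \tr(\Delta_m)=\int_M \iota_m^*\bigl[\,\tr(i\partial\Phi_m\wedge\bar\partial\Psi_m)\,\bigr]\wedge e^{\omega_m}. $$
So the first and essentially only substantial step is to identify the scalar real $(1,1)$-form $\sigma_m:=\tr(i\partial\Phi_m\wedge\bar\partial\Psi_m)$ on $\mathbf{P}(H_m)$. I would argue it is a universal multiple of $\omega_{FS}$ by equivariance: for $U$ in the unitary group of $(H_m,b_m)$ one has $\Psi_m([Uz])=U\Psi_m([z])U^{*}$, so $i\partial\Phi_m\wedge\bar\partial\Psi_m$ is equivariant as an $\End(V_m)$-valued form and its trace is therefore invariant under the (transitive) unitary action; since the space of invariant real $(1,1)$-forms on projective space is one-dimensional, $\sigma_m=c\,\omega_{FS}$ for a constant $c$ independent of $m$ and of $M$. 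To pin down $c$ I would evaluate both sides at the point $[e_1]$ attached to a unit vector $e_1$ of a $b_m$-orthonormal basis: from $\Psi_m([z])_{jk}=z_j\bar z_k/|z|^2$ and $\Phi_m([z])(A)=\tr(\Psi_m([z])A)$ one checks that the trace pairs the $(j,k)$-entry of $\partial\Phi_m$ with the $(k,j)$-entry of $\bar\partial\Psi_m$, so $\sigma_m=i\sum_{j,k}\partial(\Psi_m)_{jk}\wedge\bar\partial(\Psi_m)_{kj}$, and at $[e_1]$ (where $z_1=1$ and $z_j=0$ for $j\ge2$) this collapses to $i\sum_{j\ge2}dz_j\wedge d\bar z_j$. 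Comparing with the value at $[e_1]$ of the Fubini-Study form of Section \ref{sec::Delta_m} --- equivalently, of $2\pi i$ times the curvature of the induced metric on the hyperplane bundle --- gives $c=2\pi$.

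Granting $\sigma_m=2\pi\,\omega_{FS}$, the rest is formal. Because $\sigma_m$ has degree $2$, only the degree-$2n$ component of $\sigma_m\wedge e^{\omega_{FS}}$ survives integration over $M$, and it pulls back to $2\pi\,\omega_m\wedge\tfrac{\omega_m^{n-1}}{(n-1)!}=2\pi n\,\tfrac{\omega_m^{n}}{n!}$; hence $\tr(\Delta_m)=2\pi n\int_M\tfrac{\omega_m^{n}}{n!}$. To finish I would use that $\iota_m$ is the Kodaira embedding for $L^m$, so $\iota_m^{*}\Lambda_m\cong L^{m}$ and therefore $[\omega_m]=\iota_m^{*}[\omega_{FS}]=m[\omega]$ in de Rham cohomology; thus $\int_M\omega_m^{n}=m^{n}\int_M\omega^{n}$, which yields the claim.

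The one genuinely delicate point is the evaluation of the constant $c$: that $\sigma_m$ is proportional to $\omega_{FS}$ is immediate from equivariance, but getting the factor $2\pi$ right requires the explicit one-point computation together with consistent bookkeeping of the normalizations relating $\omega$, the Chern curvature $\Theta$, and the Fubini-Study metric. As a cross-check one can instead argue directly from $\Delta_m=e_m^{*}\circ e_m$: take a $b_m$-orthonormal basis $\{E_\alpha\}$ of $V_m$ consisting of self-adjoint endomorphisms (a real-orthonormal basis of the self-adjoint part of $V_m$ is automatically orthonormal for $\langle\cdot,\cdot\rangle$ on all of $V_m$, since self-adjoint and anti-self-adjoint endomorphisms span $V_m$), so that each $e_m(E_\alpha)=\grad_m(T_m^*(E_\alpha)/\rho_m)$ is a real vector field (Lemma \ref{emgrad}) and
$$ \tr(\Delta_m)=\sum_\alpha \|e_m(E_\alpha)\|_m^{2}=\int_M\Bigl(\sum_\alpha g_m\bigl(e_m(E_\alpha),e_m(E_\alpha)\bigr)\Bigr)\frac{\omega_m^{n}}{n!}. $$
By the same unitary-invariance the endomorphism $\sum_\alpha \nu(E_\alpha)\otimes\nu(E_\alpha)^{\flat}$ of $T\mathbf{P}(H_m)$ is a constant times the identity, so after orthogonal projection onto $TM$ the integrand is that constant times $\dim_{\mathbb{R}}T_xM=2n$; evaluating the constant at one point and invoking $[\omega_m]=m[\omega]$ again gives $\tr(\Delta_m)=2\pi n\,m^{n}\int_M\frac{\omega^{n}}{n!}$.
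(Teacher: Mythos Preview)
Your proof is correct and follows essentially the same route as the paper: both start from Proposition~\ref{prop::Deltaintform}, identify the scalar form $\tr(i\partial\Phi_m\wedge\bar\partial\Psi_m)=i\sum_{j,k}\partial(\Psi_m)_{jk}\wedge\bar\partial(\Psi_m)_{kj}$ with $2\pi\,\omega_{FS}$, and then use $[\omega_m]=m[\omega]$. The only cosmetic difference is that the paper carries out the sum globally in homogeneous coordinates to obtain $\frac{i\partial\bar\partial|z|^2}{|z|^2}-\frac{i\partial|z|^2\wedge\bar\partial|z|^2}{|z|^4}=2\pi\,\omega_{FS}$ directly, whereas you first invoke unitary invariance to reduce to a constant multiple of $\omega_{FS}$ and then evaluate at $[e_1]$ to pin down the constant; your cross-check via $e_m^*\circ e_m$ is an extra that the paper does not include.
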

\begin{proof}
Recall that we identified $V_m$ with the space of $\dim H_m \times \dim H_m$ matrices by choosing an orthonormal basis $\{s_\alpha\}$ of $H_m$.
The set of canonical matrices $E_{ij}$, then form an orthonormal basis of $V_m$.
Thus by Proposition \ref{prop::Deltaintform} one has
\begin{eqnarray*}
\tr(\Delta_m)
&=& \sum_{\alpha,\beta} \langle \Delta_m(E_{\alpha \beta}) , E_{\alpha \beta} \rangle \\
&=& \sum_{\alpha,\beta} \int_M i \partial \left(\frac{z_\alpha \bar z_\beta}{|z|^2}\right) \wedge \bar \partial \left(\frac{z_\beta \bar z_\alpha}{|z|^2}\right) \wedge e^{\omega_{FS}} \\
&=& \int_M \left(\frac{i \partial \bar \partial |z|^2}{|z|^2} - \frac{ i \partial |z|^2 \wedge \bar \partial |z|^2 }{|z|^4} \right) \wedge e^{\omega_{FS}} \\
&=& 2 \pi \int_M \omega_{FS} \wedge e^{\omega_{FS}} \\
&=& 2 \pi n \int_M \frac{\omega_m^n}{n!},
\end{eqnarray*}
whence the thesis follows since $\omega_m$ is cohomologous to $m \omega$.
\end{proof}
\section{Proof of Theorem \ref{mainthm}}

First of all we recall some results on asymptotic expansions in Berezin-Toeplitz quantization.
\begin{thm}
There is a sequence $\{b_r\}$ of self-adjoint differential operators acting on $C^\infty(M)$ such that for any smooth function $f \in C^\infty (M)$ one has the asymptotic expansion
\begin{equation}\label{expbr}
T_m^* \circ T_m (f) = \sum_{r \geq 0} b_r(f) m^{n-r} + O(m^{-\infty}),
\end{equation}
and for any $k,R \geq 0$ there exist constants $C_{k,R,f}$ such that
$$ \left\| T_m^* \circ T_m (f) - \sum_{r=0}^R b_r(f) m^{n-r} \right\|_{C^k(M)} \leq C_{k,R,f} m^{n-R-1}. $$
Moreover one has
\begin{eqnarray*}
b_0(f) &=& f, \\
b_1(f) &=& \frac{\scal(g)}{8\pi} f - \frac{1}{4\pi} \Delta f. \\
\end{eqnarray*}
\end{thm}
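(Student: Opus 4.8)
\emph{Proof strategy.} The plan is to reduce the statement to the near-diagonal asymptotics of the Bergman kernel. The computation carried out just before the theorem shows that
$$ T_m^* \circ T_m(f)(x) = \int_M |B_m(x,z)|^2_{h^m}\, f(z)\, \frac{\omega^n_z}{n!}, $$
where $B_m(x,z) = \sum_\alpha s_\alpha(x)\otimes\overline{s_\alpha(z)}$ is the Bergman kernel of $L^m$ and $|B_m(x,z)|^2_{h^m} = \sum_{\alpha,\beta} h^m(s_\alpha,s_\beta)(z)\,h^m(s_\beta,s_\alpha)(x)$; in other words $T_m^*\circ T_m(f) = \rho_m\,\beta_m(f)$, with $\beta_m$ the Berezin transform. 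So everything comes down to the asymptotics of this weighted integral, for which I would invoke the near-diagonal expansion of the Bergman kernel of Ma and Marinescu \cite{MaMar07}.

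First I would use the off-diagonal decay $|B_m(x,z)|^2_{h^m} = O(m^{-\infty})$ for $d(x,z) \geq m^{-1/3}$ to localize the integral to a geodesic ball of radius $\sim m^{-1/2+\varepsilon}$ about $x$, at the cost of an $O(m^{-\infty})$ error controlled in every $C^k$ norm. On such a ball I would fix K\"ahler normal coordinates centered at $x$ and a compatible trivialization of $L^m$; the Ma--Marinescu expansion then takes the form
$$ |B_m(x,z)|^2_{h^m} = m^{2n}\, e^{-c_0 m |z-x|^2}\Big( \textstyle\sum_{j\geq 0} a_j(x,z)\, m^{-j/2} + O(m^{-\infty}) \Big), $$
with $a_0 \equiv 1$, the $a_j$ universal polynomials in the coordinates and in the jets of $g$ at $x$, and $c_0>0$ depending only on the normalization $\omega = 2\pi i\Theta$. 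Then I would rescale $z = x + w/\sqrt m$, Taylor expand $f$, the $a_j$ and the volume density in $w$, and integrate term by term against the Gaussian $e^{-c_0|w|^2}$: this produces a complete asymptotic expansion in powers of $m^{-1/2}$ whose coefficients are differential operators applied to $f$; the half-integer powers drop out by parity of the Gaussian, leaving the claimed expansion $\sum_{r\geq 0}b_r(f)\, m^{n-r}$, and the uniform remainder bounds in \cite{MaMar07} give the stated $C^k$ estimates.

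To see that each $b_r$ is self-adjoint, I would observe that $T_m^*\circ T_m = S^*S$ with $S = T_m$ is self-adjoint on $C^\infty(M)$, so its asymptotic expansion agrees with that of its adjoint and uniqueness of asymptotic expansions forces $b_r = b_r^*$. For the leading coefficients, $b_0(f) = f$ since the Gaussian has unit mass and concentrates at $z=x$. The operator $b_1$ receives two contributions: the $m^{-1}$ coefficient of the amplitude, which for $f\equiv 1$ must reproduce the subleading term of the Tian--Yau--Zelditch expansion of $\rho_m = T_m^*(I)$ and accounts for the zeroth-order part $\tfrac{\scal(g)}{8\pi}f$; and the second-order Taylor term of $f$ paired with the second moment of the Gaussian, which gives $-\tfrac{1}{4\pi}\Delta f$. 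Alternatively --- and this is probably the cleanest route --- I would simply quote the known asymptotic expansion of the Berezin transform (Karabegov, Schlichenmaier), combine it with $T_m^*\circ T_m = \rho_m\,\beta_m$ and the Tian--Yau--Zelditch expansion of $\rho_m$, and read off $b_0$ and $b_1$ directly.

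The main obstacle is the bookkeeping in the rescaled integral: pinning down $b_1$ means correctly combining the first curvature correction to the Bergman amplitude, the non-Euclidean corrections to both the exponent and the volume form in K\"ahler normal coordinates, and the complex Hessian of $f$, all while tracking the numerical constants (notably the powers of $2\pi$ coming from the normalization $\omega = 2\pi i\Theta$). This is exactly the computation carried out, in greater generality, in \cite{MaMar07, MaMar10}, which is why it seems preferable to record the result as a recollection from the literature rather than to reprove it here.
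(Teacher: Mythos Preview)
Your proposal is correct and follows the same approach as the paper: the existence of the expansion and the formulas for $b_0$, $b_1$ are attributed to Ma--Marinescu \cite{MaMar07}, and self-adjointness of the $b_r$ is deduced exactly as you do, from self-adjointness of $T_m^*\circ T_m$ together with uniqueness of the asymptotic coefficients. The only difference is that the paper is terser --- it simply cites \cite{MaMar07} without sketching the Bergman-kernel localization argument or the Berezin-transform route that you outline.
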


\begin{proof}
See Ma and Marinescu \cite{MaMar07}. The only fact one still needs to show is self-adjointness of operator $b_r$. It follows readily by self-adjointness of $T_m^* \circ T_m$ and expansion \eqref{expbr}. Indeed one has 
$$ 0 = \sum_{r = 0}^R m^{n-r}  \int_M \left( b_r(f) \, \bar g - f \, \overline{b_r(g)} \right) \frac{\omega^n}{n!} + O(m^{n-R-1}),$$
as $m \to + \infty$, for all $f,g \in C^\infty(M)$. 
\end{proof}

Since the Bergman kernel satisfies $\rho_m = T_m^* \circ T^m(1)$, one recovers the well known asymptotic expansion \cite{Tia90, MaMar07, Ghi10, MaMar10}
\begin{equation}\label{exprhom} \rho_m = \sum_{r \geq 0} a_r m^{n-r} + O(m^{-\infty}), \end{equation}
where $a_r = b_r(1) \in C^\infty(M)$ depends polynomially in the curvature of $g$ and its covariant derivatives. In particular
\begin{equation} 
a_0 = 1, \qquad a_1 = \frac{\scal(g)}{8\pi}.
\end{equation}

\begin{lem}\label{lem::Deltamfunction}
For any $A \in V_m$ one has
\begin{equation*}
\Delta_m(A) = T_m \left( \frac{\omega_m^n}{\rho_m \, \omega^n} \Delta_{g_m} \left( \frac{T_m^* (A)}{\rho_m} \right) \right),
\end{equation*}
where $\Delta_{g_m}$ denotes the Laplacian of metric $g_m$.
\end{lem}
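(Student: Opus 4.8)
The plan is to unwind both sides against an arbitrary test endomorphism $B \in V_m$ using the inner product \eqref{HermprodV_m}, and to recognize the resulting integral as an integration by parts on $(M,g_m)$. Concretely, I would start from the formula established just before Proposition \ref{prop::Deltaintform}, namely
$$ \tr(\Delta_m(A)B^*) = \int_M i\partial\,\tr(\Psi_m A)\wedge\bar\partial\,\tr(\Psi_m B^*)\wedge\frac{\omega_m^{n-1}}{(n-1)!}, $$
and then use \eqref{pbtrAPsi}, which identifies $\iota_m^*\tr(\Psi_m A)$ with $T_m^*(A)/\rho_m$ (and likewise for $B$), to rewrite the right-hand side purely in terms of the functions $u = T_m^*(A)/\rho_m$ and $v = T_m^*(B)/\rho_m$ on $M$:
$$ \tr(\Delta_m(A)B^*) = \int_M i\partial u\wedge\bar\partial\bar v\wedge\frac{\omega_m^{n-1}}{(n-1)!}. $$

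The next step is a standard Kähler identity: for the metric $g_m$ with Kähler form $\omega_m$, one has $n\, i\partial u\wedge\bar\partial\bar v\wedge\omega_m^{n-1} = \big(g_m(\grad_m u,\grad_m \bar v) + \text{divergence-type terms}\big)\,\omega_m^n$, or more usefully, integrating by parts and recalling the sign convention for $\Delta_{g_m}$ stated in the introduction ($\Delta(f)\,\omega^n = -n\,i\partial\bar\partial f\wedge\omega^{n-1}$), one gets
$$ \int_M i\partial u\wedge\bar\partial\bar v\wedge\frac{\omega_m^{n-1}}{(n-1)!} = \int_M \Delta_{g_m}(u)\,\bar v\,\frac{\omega_m^n}{n!}. $$
Thus $\tr(\Delta_m(A)B^*) = \int_M \Delta_{g_m}\!\big(T_m^*(A)/\rho_m\big)\,\overline{T_m^*(B)/\rho_m}\,\frac{\omega_m^n}{n!}$. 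The final move is to convert the volume form $\omega_m^n/n!$ back to $\omega^n/n!$ by inserting the Jacobian factor $\omega_m^n/\omega^n$, and to split off the $\rho_m$ in $\overline{T_m^*(B)/\rho_m}$ so as to recognize the pairing against $T_m^*(B)$. That is, writing $\phi = \frac{\omega_m^n}{\rho_m\,\omega^n}\Delta_{g_m}\!\big(T_m^*(A)/\rho_m\big)$, we obtain $\tr(\Delta_m(A)B^*) = \int_M \phi\,\overline{T_m^*(B)}\,\frac{\omega^n}{n!} = \langle T_m^*(B),\phi\rangle_{L^2}$ (being careful with which slot is conjugated). By the adjunction $\langle T_m^*(B),\phi\rangle = \langle B, T_m(\phi)\rangle = \tr\big(T_m(\phi)B^*\big)$ established in Lemma \ref{adjT}, and since $B$ is arbitrary, we conclude $\Delta_m(A) = T_m(\phi)$, which is exactly the claimed identity.

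The main obstacle I anticipate is purely bookkeeping rather than conceptual: keeping the complex-conjugation conventions and the factor placements consistent throughout — in particular making sure that the Hermitian inner product $\langle\cdot,\cdot\rangle$ on $V_m$, the $L^2$-product on $C^\infty(M)$, and the adjoint relation $T_m^* = (T_m)^*$ all line up so that the stray $\bar{\phantom{v}}$ lands on the test object $B$ and not on $A$. One also must check that the integration-by-parts step produces no boundary contribution (it does not, since $M$ is closed) and that the Jacobian $\omega_m^n/\omega^n$ is a genuine smooth positive function, which it is because both forms are Kähler forms for metrics in the same cohomology class and $\omega_m$ is a smooth positive $(1,1)$-form. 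Apart from these routine checks, the argument is a direct chain of the already-proven Lemma \ref{adjT}, equation \eqref{pbtrAPsi}, and the integral formula preceding Proposition \ref{prop::Deltaintform}.
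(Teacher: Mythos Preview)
Your proposal is correct and follows essentially the same route as the paper. The only cosmetic difference is your starting point: you begin from the $\partial\wedge\bar\partial$ integral formula displayed before Proposition~\ref{prop::Deltaintform}, whereas the paper's proof starts one step earlier from the definition $\tr(\Delta_m(A)B^*)=\int_M g_m(e_m(A),e_m(B))\,\omega_m^n/n!$ and invokes Lemma~\ref{emgrad} directly; since the formula you cite was itself obtained from that definition via Lemma~\ref{emgrad} and \eqref{pbtrAPsi}, the two arguments are the same integration-by-parts followed by the same volume-form conversion and adjunction. (Your own caveat about slot ordering is apt: with the paper's convention $\langle f,g\rangle=\int_M f\bar g\,\omega^n/n!$ the integral $\int_M\phi\,\overline{T_m^*(B)}\,\omega^n/n!$ is $\langle\phi,T_m^*(B)\rangle$, not $\langle T_m^*(B),\phi\rangle$, but the conclusion $\tr(T_m(\phi)B^*)$ you reach is correct.)
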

\begin{proof}
By definition of $\Delta_m$, for any $B \in V_m$ it holds
\begin{equation*}
\tr(\Delta_m(A)B^*) = \int_M g_m (e_m (A), e_m (B)) \frac{\omega^n_m}{n!}, 
\end{equation*}
whence, by Lemma \ref{emgrad} and integration by parts it follows
\begin{equation*} 
\tr(\Delta_m(A)B^*) = \int_M \Delta_{g_m} \left( \frac{T_m^* (A)}{\rho_m} \right) \overline{ \frac{T_m^* (B)}{\rho_m} } \frac{\omega_m^n}{n!}. \end{equation*}
The right hand side can be rewritten as
\begin{equation*}
\int_M \frac{\omega_m^n}{\rho_m \, \omega^n}\Delta_{g_m} \left( \frac{T_m^* (A)}{\rho_m} \right) \overline{ T_m^* (B) } \frac{\omega^n}{n!} = \tr \left(T_m \left( \frac{\omega_m^n}{\rho_m \, \omega^n} \Delta_{g_m} \left( \frac{T_m^* (A)}{\rho_m} \right) \right) B^* \right),
\end{equation*} 
whence the statement follows by arbitrariness of $B$.
\end{proof}

For any $f \in C^\infty(M)$, by Lemma above one has
\begin{equation}\label{perpasymptexp}
T^*_m \circ \Delta_m \circ T_m (f) 
= T_m^* \circ T_m \left( \frac{\omega_m^n}{\rho_m \, \omega^n} \Delta_{g_m} \left( \frac{T_m^* \circ T_m (f)}{\rho_m} \right) \right),
\end{equation}
thus the statement of Theorem \ref{mainthm} follows readily by Theorem \ref{expbr} and asymptotic expansion \eqref{exprhom}.
In particular one has
\begin{equation*}
T_m^* \circ T_m (f) = m^n f + m^{n-1} b_1(f) + O(m^{n-2})
\end{equation*}
whence
\begin{eqnarray*}
\rho_m &=& m^n + m^{n-1} a_1 + O(m^{n-2}), \\
\frac{T_m^* \circ T_m (f)}{\rho_m} &=& 
f + m^{-1} (b_1(f) -a_1f) + O(m^{-2}) \\
\omega_m &=& 
m \omega + O(m^{-1}), \\
\frac{\omega_m^n}{\rho_m \omega^n} 
&=& 
1 - m^{-1} a_1 + O(m^{-2}), \\
\Delta_{g_m} (f) &=& m^{-1} \Delta(f) + O(m^{-3}).
\end{eqnarray*}
Substituting in \eqref{perpasymptexp} finally gives
\begin{eqnarray*}
T^*_m \circ \Delta_m \circ T_m (f) 
&=& T_m^* \circ T_m \left( \left(1 - m^{-1} a_1 \right) m^{-1}\Delta \left( f + m^{-1} (b_1(f) -a_1f) \right) + O(m^{-3}) \right) \\
&=& m^{-1} T_m^* \circ T_m \left( \Delta (f) + m^{-1} \left(\Delta b_1(f) - \Delta(a_1f) - a_1\Delta(f) \right) + O(m^{-2}) \right) \\
&=& m^{-1} T_m^* \circ T_m \left( \Delta (f) + m^{-1} \left(\Delta b_1(f) - \Delta(a_1f) - a_1\Delta(f) \right) + O(m^{-2}) \right) \\
&=& m^{n-1} \Delta  f + m^{n-2} \left( \Delta (b_1(f)) + b_1 ( \Delta (f) ) - \Delta(a_1f) - a_1\Delta(f) \right) + O(m^{n-3}).
\end{eqnarray*}
This obviously proves $P_0 = \Delta$ and, recalling that $b_1(f) = \frac{\scal(g)}{8\pi} f - \frac{1}{4\pi} \Delta (f)$ and $a_1 = \frac{\scal(g)}{8\pi}$, it gives
\begin{eqnarray*}
8 \pi \, P_1(f) 
&=& \Delta (\scal(g) f - 2 \Delta(f)) + \scal(g) \Delta (f) - 2 \Delta^2(f) - \Delta(\scal(g)f) - \scal(g) \Delta(f) \\
&=& - 4 \Delta^2(f),
\end{eqnarray*}
which concludes the proof.

\section{Balanced metrics}\label{sec::bal}

Balanced metrics have been introduced by Donaldson in connection with the existence problem of constant scalar curvature K\"ahler metric on polarized manifolds \cite{Don01}.
Recall that a metric is called \emph{$m$-balanced} if the density of state function $\rho_m$ is constant.
Note that the value of such a constant is not arbitrary for $\rho_m$ satisfies $\int_M \rho_m \frac{\omega^n}{n!} = \dim H^0(M,L^m)$. 
Moreover, since in general one has $\omega_m = m \omega + \frac{i}{2\pi} \partial \bar \partial \log \rho_m$, $\omega$ is $m$-balanced if and only if $\omega_m = m \omega$.
Thus, assuming that $\omega$ is $m$-balanced, by Lemma \ref{lem::Deltamfunction} for any $A \in V_m$ one has
\begin{eqnarray*}
\Delta_m(A) &=& 
\rho_m^{-2} \, T_m \left( \frac{\omega_m^n}{ \omega^n} \Delta_{g_m} \left(T_m^* (A) \right) \right) \\
&=&
\frac{m^{n-1} \left(\int_M \frac{\omega^n}{n!}\right)^2}{\left(\dim H^0(M,L^m)\right)^2} \, T_m \circ \Delta \circ T_m^* (A),
\end{eqnarray*}
which proves Theorem \ref{thm::bal}.

\end{document}